\documentclass[11pt,reqno]{amsart}
\usepackage{graphicx}
\usepackage{mathtools}

\usepackage{mathtools}
\mathtoolsset{showonlyrefs=true}
\usepackage{ulem}
\usepackage{color}

\usepackage{amsmath}
\usepackage{amssymb}
\usepackage{graphicx}
\usepackage{accents}
\usepackage{pstricks,pst-node,pst-coil,pst-plot}
\usepackage{mathtools}
\usepackage{ulem}
\usepackage{mathtools}
\usepackage{ulem}
\usepackage{graphicx}
\usepackage{color}
\usepackage{graphicx}
\usepackage{color}
\usepackage{dcolumn}
\usepackage{graphicx}
\usepackage{color}
\usepackage{dcolumn}
\usepackage{bm}
\usepackage{mathrsfs}
\usepackage{graphicx}
\usepackage{color}
\usepackage{dcolumn}
\usepackage{bm}
\usepackage{slashed}
\usepackage{epstopdf}
\usepackage{slashed}
\usepackage{bm}
\usepackage{slashed}
\usepackage{dcolumn}
\usepackage{bm}
\usepackage{slashed}
\usepackage{color}
\usepackage{color}
\def\m{\mathfrak{m}}

\newtheorem{thm}{Theorem}
\newcommand{\be}{\begin{equation}}
\newcommand{\ee}{\end{equation}}

 \numberwithin {equation}{section}

\newtheorem{prop}{Proposition}

\newcommand{\beq}{\begin{equation}}
\newcommand{\eeq}{\end{equation}}
\newcommand{\bee}{\begin{equation*}}
\newcommand{\eee}{\end{equation*}}
\def\d{\mathrm{d}}
\def\p{\partial}
\dedicatory{Dedicated to the memory of Professor Luen-Fai Tam}
\def\mbf{\mathfrak{m}_{_{\rm{B}}}}
\def\mb{\mathfrak{m}_{_{\rm{HB}}}}
\def\mby{\mathfrak{m}_{_{\rm{BY}}}}

\let\<\langle
\let\>\rangle

\newcommand{\definedas}{\mathrel{\raise.095ex\hbox{\rm :}\mkern-5.2mu=}}

\begin{document}
%%%%%%%%%%%%%%%%%%%%%%%%%%%%%%%%%%%%%%%%%%%%%%%%%%%%%%%%%%%%%%%%%

%%%%%%%%%%%%%%%%%%%%%%%%%%%%%%%%%%%%%%%%%%%%%%%%%%%%%%%%%%%%%%%%%
%\begin{center}
%  \framebox{\framebox{
 %     \vbox{This is project {\red (2+1)-D Hyperbolic Quasi-Local Mass}\\
 %      Current version {\blue note0210}, from {\blue note0209}, most
 %       recent changes by {\blue }.}  }}
%\end{center}
%%%%%%%%%%%%%%%%%%%%%%%%%%%%%%%%%%%%%%%%%%%%%%%%%%%%%%%%%%%%%%%%%

\title[The total geodesic curvature and the $(2+1)$-D hyperbolic mass]{The total geodesic curvature and the $(2+1)$-dimensional hyperbolic mass}

\author{Xiaokai He}
\address[Xiaokai He]{School of Mathematics and Statistics, Hunan First Normal University, Changsha 410205, China}
\email{sjyhexiaokai@hnfnu.edu.cn}
\author{Xiaoning Wu}
\address[Xiaoning Wu]{Institute of Mathematics, Academy of Mathematics and Systems Science and State Key Laboratory of Mathematical Sciences,
	Chinese Academy of Sciences, Beijing 100190, China}
\email{wuxn@amss.ac.cn}
\author{Naqing Xie}
\address[Naqing Xie]{School of Mathematical Sciences, Fudan
University, Shanghai 200433, China.}
\email{nqxie@fudan.edu.cn}
%\dedicatory{}

\begin{abstract}
In this paper, we derive a novel geometric inequality involving the total geodesic curvature, serving as an analogue of the Brown-York mass in the setting of 
$(2+1)$-dimensional gravity. The asymptotic behaviour of this newly defined quasi-local mass is examined for large ellipses in the time-symmetric slices of the Ba\~{n}ados-Teitelboim-Zanelli black hole solutions. Moreover, we establish an improved upper bound for the $(2+1)$-dimensional hyperbolic Bartnik mass.

\end{abstract}

\subjclass[2020]{Primary: 53C21; Secondary: 83C99}
%
% Latex changes the classification to 1991 ?!  it should be 2010!

%\date{}

\keywords{}

\maketitle

%\tableofcontents

%%%%%%%%%%%%%%%%%%%%%%%%%%%%%%%%%%%%%%%%%%%%%%%%%%%%%%%%%%%%%%%%%%%%%%%%%%%%%%%%
%%%%%%%
\section{Introduction}
In the theory of general relativity with vanishing cosmological constant, an isolated gravitational system is modelled by a $(3+1)$-dimensional asymptotically flat spacetime. Defined at spatial infinity, the Arnowitt-Deser-Misner (ADM) mass \cite{ADM61} provides a measure of the total mass - or energy content - of such a system. As the simplest nontrivial example, we consider the Schwarzschild metric
\bee
\d s^2 =-\big(1-\frac{2m}{r}\big)\d t^2 + \frac{1}{1-\frac{2m}{r}}\d r^2 +r^2\d \theta^2 +r^2\sin^2\theta \d \varphi^2.
\eee
 On a constant-time slice, the induced metric approaches flatness as $r \rightarrow \infty$ and the ADM integral converges to the mass parameter $m$. This demonstrates that the ADM mass correctly captures the intuitive notion that a Schwarzschild black hole of mass $m$ has total energy $m$ as measured from infinity. But suppose we are interested only in a bounded region: how much mass does this region contain?  This motivates us to seek a notion of quasi-local mass quantities \cite{Sz09}. Among the existing geometrically natural definitions are the Bartnik mass \cite{Ba89} and the Brown-York mass \cite{BY93}, which we now introduce.

Let $ \Sigma$ be a $2$-sphere. Given a Riemannian metric $\gamma $ and a function $H$ on $\Sigma$, the Bartnik quasi-local mass of the triple $(\Sigma, \gamma, H)$ can be defined as
\bee
\mbf(\Sigma , \gamma ,H) = \inf \left \lbrace \m_{ADM} (M, g) \,\vert \,(M, g) \text{ is an admissible extension of }(\Sigma, \gamma,H)\right \rbrace.
\eee
Here $ \m_{ADM} (\cdot)$ is the ADM mass and
an asymptotically flat Riemannian $3$-manifold $(M, g)$ with boundary $\p M$ is  an {admissible extension of $(\Sigma, \gamma,H)$} if
\begin{itemize}
\item[i)]  $g$  has nonnegative scalar curvature;
\item[ii)]   $\p M$ with the induced metric is isometric to  $(\Sigma, \gamma)$ and, under the isometry, the mean curvature of $\p M$ in
$(M, g $) equals $H$; and
\item[ iii)]  $(M, g)$ satisfies certain non-degeneracy conditions that prevent $\m_{ADM} (M, g)$ from being arbitrarily small; for instance,
it is often required that $(M, g)$  contain no closed minimal surfaces (enclosing $\p M$), or that $\p M$ be outer-minimising in $(M, g)$.
\end{itemize}

In recent years there have been many results concerning estimates of the value of the Bartnik mass (cf. \cite{CCMP17,LS16,MS15,MP24,MWX20}). In the presence of a negative cosmological constant, the spacetime is assumed to be asymptotically anti-de Sitter. There is a natural analogue of the Bartnik mass for asymptotically hyperbolic (AH) manifolds with a negative lower bound on the scalar curvature \cite{CCM18}.  For a review and a more detailed discussion of the Bartnik mass, see e.g. \cite{Mc24}.

Suppose that the $2$-sphere $(\Sigma, \gamma)$ has positive Gauss curvature. By the work of Nirenberg \cite{Nirenberg} (and also  Pogorelov \cite{Pogorelov}), $(\Sigma, \gamma)$ is isometric to some strictly convex surface in the Euclidean space $ \mathbb{R}^3$. Let $\bar H$ be the mean curvature of such an isometric embedding. The Brown-York mass of $(\Sigma, \gamma, H)$ is defined to be
\bee
\mby(\Sigma,\gamma,H)=\frac{1}{8\pi}\int_\Sigma \big(\bar H- H \big)\d \gamma.
\eee
Assume further that $(\Sigma, \gamma)$ is the boundary of a Riemannian region  $(\Omega,g)$ with nonnegative scalar curvature such that the mean curvature equals the strictly positive $H$; then $\mby(\Sigma,\gamma,H)\geq 0$, with equality only if $(\Omega,g)$ is isometric to a domain of  $ \mathbb{R}^3$ \cite{ST-JDG02}. 

Our discussion of mass in general relativity has so far been rooted in the $(3+1)$-dimensional world we inhabit. However, the conceptual framework extends naturally to other dimensions. In $(2+1)$-dimensional asymptotically flat vacuum gravity \cite{St63}, the only solution is locally flat. But there is a well-known conical singularity solution, sometimes called the `$(2+1)$-D Schwarzschild' in an informal sense, given by:
\bee
\d s^2 = -(1-8\mu)\d t^2+ \frac{1}{1-8\mu}\d r^2 +r^2\d \varphi^2.
\eee
Here $\mu$ is a mass parameter and there is a conical deficit angle $8\pi\mu$. It describes a point particle rather than a black hole, and there is no analogue of the ADM mass in $(2+1)$ dimensions.

To obtain a true black hole in $(2+1)$ dimensions, we introduce a negative cosmological constant. This leads us to the Ba\~{n}ados-Teitelboim-Zanelli (BTZ) black hole solution \cite{BTZ92}, which shares many features with its $(3+1)$-dimensional Schwarzschild counterpart while remaining exact and analytically tractable.
We will be interested in manifolds whose metrics asymptote to the hyperbolic $2$-metric at large distances, which we will call asymptotically locally hyperbolic (ALH). The Hamiltonian mass of such spaces has been established in the literature \cite{CCQW25,KT79}.

In this paper, we establish a new geometric inequality for the total geodesic curvature, which is analogous to the Brown–York mass \cite{BY93} in the context of 
$(2+1)$-dimensional gravity. We then investigate the limiting behaviour of this new quasi-local mass for large ellipses within the time-symmetric slices of the BTZ black hole solutions. Furthermore, we obtain an improved upper bound estimate for the $(2+1)$-dimensional hyperbolic Bartnik mass.

\section{The total geodesic curvature and a new quasi-local mass}\label{Mt1}
\subsection{A geometric inequality for the total geodesic curvature}
Let $(\Omega,g)$ be a Jordan domain which is diffeomorphic to a two-dimensional closed disk and has boundary $\partial \Omega =\Sigma$. We choose an orthonormal frame $\{e_1,e_2\}$ so that $e_1$ is tangent to $\Sigma$ and $e_2$ is normal to $\Sigma$. The boundary curve $\Sigma$ is parameterised as
\be
[0,2\pi) \ni \varphi \mapsto i(\varphi) \in \Sigma. \ee
Denote by $L=2\pi r_0$ and $k=g(\nabla^g_{e_1}e_2,e_1)$ the length and the geodesic curvature of $\Sigma$ with respect to the outward normal $e_2$ of $\Sigma$, respectively.

If $g$ is the standard Euclidean metric $g^E$, then
\be
\int_\Sigma k \d s=2\pi.\ee
This is the well-known rotation index theorem for a plane curve \cite{doC}.

Now suppose that the metric $g$ has nonnegative Gauss curvature $K_g$. Is there any good estimate for the total geodesic curvature $\int_\Sigma k \d s$? It follows immediately from the Gauss-Bonnet formula \cite{doC},
\be
\int_\Sigma k \d s = 2\pi -\int_{\Omega} K_g \d g \leq 2\pi.\ee

This result can also be viewed from a conformal perspective. Let $f$ be the solution to the Dirichlet problem of the following Poisson equation:
\be
\begin{cases}
\Delta_g f  &= \ -K_g  \ \mbox{in} \  \Omega , \\
\ f  &= \ 0  \ \mbox{on} \  \Sigma ,
\end{cases}
\ee
where $\Delta_g$ and $K_g$ are the Laplacian and the Gauss curvature of the metric $g$. Then the conformal metric $\bar g=e^{2f}g$ has zero Gauss curvature and hence must be Euclidean.  Note that $K_g$ is assumed to be nonnegative; by the Hopf lemma, the curvature $\bar k=\bar g(\bar\nabla_{\bar {e}_1}\bar e_2,\bar e_1)$ with respect to the metric $\bar g$ is no less than that of the metric $g$. Therefore,
\be
\int_\Sigma k \d s \leq \int_\Sigma \bar k \d s=2\pi.\ee

It is natural to ask what we can say if the Gauss curvature has a negative lower bound. In this case the above technique encounters difficulties since the source term in the Poisson equation has no definite sign. However, the Gauss-Bonnet formula still gives an upper bound,
\be
\int_\Sigma k \d s = 2\pi +\int_{\Omega} \big(-K_g\big) \d g \leq 2\pi + |\Omega|_g\ee
where $|\Omega|_g$ is the area of $\Omega$ with respect to the metric $g$, assuming $K_g \ge -1$. This result concerns the effect of the boundary curve's curvature upon the internal geometry of the bounded domain.  Note that the upper bound involves the two-geometry of the entire $\Omega$.

Is it possible to give an upper bound for the total geodesic curvature that involves only the boundary geometry of $\Sigma$? The present work is of this nature. We will study the boundary behaviour of a compact 2-surface whose Gauss curvature is allowed to be negative.

A key ingredient in this work is the following elementary but useful observation: any simple closed curve in the physical space, equipped with its intrinsic Riemannian metric, can be  isometrically embedded into the standard hyperbolic 2-space as a round circle. This provides a canonical reference configuration against which geometric quantities on the original curve can be compared. More precisely, we have the following proposition.
\begin{prop}\label{prop:isometric-embedding}
Let $\Sigma $ be a simple closed curve in the physical 2-space $(\Omega, g)$ with arc-length parameter $s$ and let $L$ denote its total length. Set
$
r_0 := \frac{L}{2\pi}
$
and define an angular coordinate on $\Sigma$ by
\[
\varphi := \frac{s}{r_0} = \frac{2\pi s}{L}, \qquad 0 \leq s < L.
\]
Then the map
\[
\begin{aligned}
F: \Sigma &\longrightarrow \mathbb{H}^2\\
\varphi &\longmapsto \bigl( r_0,\, \varphi \bigr),
\end{aligned}
\]
where $\mathbb{H}^2$ is represented in the polar coordinates $(r,\varphi)$ with metric
\[
g_{-1} = \frac{\d r^2}{1+r^2} + r^2 \d\varphi^2,
\]
is an isometric embedding. Moreover, its image $F(\Sigma)$ is precisely the round circle in $\mathbb{H}^2$ centered at the origin with coordinate radius $r_0$.
\end{prop}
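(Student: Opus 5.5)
The plan is to compute directly. A simple closed curve carries no intrinsic geometry beyond its total length: its induced metric in arc-length is just $\d s^2$ on a circle of circumference $L$. So it suffices to check that $F$ pulls the hyperbolic metric back to $\d s^2$, and that $F$ is injective.

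\emph{Step 1 (well-definedness).} The arc-length parametrisation $s\in[0,L)$, with $s$ read modulo $L$, identifies $\Sigma$ diffeomorphically with $\mathbb{R}/L\mathbb{Z}$. The induced metric is $g|_\Sigma=\d s^2$. Under $\varphi=2\pi s/L$, the interval $[0,L)$ corresponds to $[0,2\pi)$, and $s\mapsto s+L$ corresponds to $\varphi\mapsto\varphi+2\pi$. Hence $\varphi$ is a genuine angular coordinate on $\Sigma\cong\mathbb{R}/2\pi\mathbb{Z}$. Since $(r,\varphi)$ are polar coordinates on $\mathbb{H}^2$, with $\varphi$ periodic of period $2\pi$, the map $F(\varphi)=(r_0,\varphi)$ is well defined and smooth.

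\emph{Step 2 (isometry).} Along the image, $r\equiv r_0$, so $\d r=0$. The pullback is therefore
\[
F^*g_{-1}=\frac{0}{1+r_0^2}+r_0^2\,\d\varphi^2=r_0^2\,\d\varphi^2 .
\]
Since $\d\varphi=\d s/r_0$, this equals $\d s^2=g|_\Sigma$. Thus $F$ is an isometric immersion; in particular $|F'(s)|_{g_{-1}}=1$.

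\emph{Step 3 (embedding and image).} Because $r_0=L/2\pi>0$ and $\varphi$ ranges over one full period, $F$ is injective. Its domain $\Sigma$ is compact, so $F$ is a homeomorphism onto its image, and hence an embedding. The image is $\{r=r_0\}$, the set of points at fixed coordinate radius from the origin. By rotational symmetry of $g_{-1}$, this is the geodesic circle centered at the origin with hyperbolic radius $\operatorname{arcsinh} r_0$. Its length is $\int_0^{2\pi}r_0\,\d\varphi=L$, consistent with Step 2.

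No step is a genuine obstacle. The only point needing care is Step 1: one must check that the periods match, namely that $L$ in $s$ corresponds exactly to $2\pi$ in $\varphi$. This is precisely what the choice $r_0=L/2\pi$ guarantees, and it is what makes $F$ close up without a cone angle. It is also worth remarking that no extrinsic information about $\Sigma\subset(\Omega,g)$ is used. The geodesic curvature of the image, $\sqrt{1+r_0^2}/r_0$, is what will later serve as the reference quantity.
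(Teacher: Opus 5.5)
Your proposal is correct and follows the paper's proof: you write the induced metric as $\d s^2 = r_0^2\,\d\varphi^2$, pull back $g_{-1}$ along $r\equiv r_0$ (so $\d r=0$) to obtain $r_0^2\,\d\varphi^2$, and identify the image as $\{r=r_0\}$. Your extra checks fill in details the paper leaves implicit: that the periods match so $F$ is well defined, that injectivity together with compactness makes $F$ an embedding, and that the image is the geodesic circle of hyperbolic radius $\operatorname{arcsinh} r_0$.
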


\begin{proof}
By the definition of the arc-length parameter, the induced metric on $\Sigma$ takes the form
\[
g|_{\Sigma} = \d s^2.
\]
Under the change of variables $\varphi = \frac{s}{r_0}$, we have $\d s = r_0  \d\varphi$, and since $s$ ranges over $[0,L)$, the new coordinate $\varphi$ ranges over $[0,2\pi)$. Hence the boundary metric becomes
\be\label{g1}
g|_{\Sigma} = r_0^2 \d \varphi^2. 
\ee

Now consider the image $F(\Sigma)$ in $\mathbb{H}^2$. By construction, the radial coordinate $r$ is constant along $F(\Sigma)$, namely $r = r_0$; consequently $\d r = 0$ on the image. The pullback of the hyperbolic metric $g_{-1}$ via $F$ is therefore
\be\label{g2}
F^\ast \bigl(g_{-1}\bigr)\big|_{F(\Sigma)}
= \left( \frac{\d r^2}{1+r^2} + r^2 \d\varphi^2 \right)\Big|_{r = r_0, \d r = 0}
= r_0^2 \d \varphi^2. 
\ee
Comparing \eqref{g1} and \eqref{g2}, we see that
\[
F^\ast\bigl(g_{-1}\bigr)\big|_{F(\Sigma)} = g|_{\Sigma},
\]
which proves that $F$ is an isometric embedding.

Finally, the image set is given explicitly by
\[
F(\Sigma) = \bigl\{ (r,\varphi) \in \mathbb{H}^2 | r = r_0, \varphi \in [0,2\pi) \bigr\},
\]
which is exactly a round circle in the hyperbolic 2-space $\mathbb{H}^2$ centered at the origin with coordinate radius $r_0$. 
\end{proof}

For a round circle $\Sigma_r$ with constant coordinate radius $r$ in the hyperbolic 2-space $\mathbb{H}^2$, its outward unit normal is given by $n^i=(\sqrt{1+r^2},0)$. With respect to the ambient metric $g_{-1}$, the geodesic curvature of $\Sigma_r$
can be computed explicitly as
\begin{equation}\label{tgk-1}
\hat k|_{\Sigma_r}= \frac{1}{\sqrt{\det(g_{-1})}}\, \partial_i \Bigl( \sqrt{\det (g_{-1})}\, n^i \Bigr)
= \frac{\sqrt{1+r^2}}{r}.
\end{equation}

With this preparation, we are now in a position to state the first main theorem of this paper.

\begin{thm}\label{T1}
A simple closed curve $\Sigma$ is parameterised as
\bee
[0,2\pi) \ni \varphi \mapsto i(\varphi) \in \Sigma. \eee
Denote by $L=2\pi r_0$ its length. One can isometrically embed the boundary curve $\Sigma$ as the round circle $F(\Sigma)=\{r=r_0\}$ into the standard hyperbolic 2-space $(\mathbb{H}^2,g_{-1}=\frac{\d r^2}{1+r^2}+r^2\d \varphi^2)$ by identifying the angle parameter $\varphi$. Denote by $\hat k$ the geodesic curvature of the embedded round circle $F(\Sigma)$ in $\mathbb{H}^2$. Given any positive function $k>0$ on $\Sigma$, there is a unique function $ u > 0 $ on $E =\mathbb{H}^2\setminus B_{r_0}$ such that the metric $ g_u = \frac{u^2 }{1+r^2}\d r^2 + r^2 \d \varphi^2 $
is asymptotically locally hyperbolic  and \\
\noindent (1) The Gauss curvature $K(g_u)$ of $g_u$ equals $-1$;\\
\noindent (2)
The geodesic curvature of $ \Sigma = \p E $ in $(E, g_u)$ equals $k$; and\\
\noindent (3) The Hamiltonian mass $H^0(g_u)$ satisfies
\be\label{MT}
 H^0(g_u)  \leq \frac{1}{\pi}\int_{F(\Sigma)} \big(\hat{k}-k \circ F^{-1}\big)\sqrt{1+r_0^2}\d s = \frac{1}{\pi}\int_\Sigma \big(\hat{k}-k\big)\sqrt{1+r_0^2}\d s,
\ee
where $\hat k$ is the geodesic curvature with respect to the metric $g_{-1}$.
\end{thm}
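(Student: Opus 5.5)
The plan is to exploit the fact that for a warped metric of the form $g_u=A^2\,\d r^2+r^2\d\varphi^2$, with $A=u/\sqrt{1+r^2}$, the curvature condition (1) reduces, ray by ray, to an explicitly integrable ODE in $r$. This is the $2$-dimensional analogue of the Shi--Tam quasi-spherical construction, except that here no parabolic equation is needed.

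First I compute the Gauss curvature of $E\,\d r^2+G\,\d\varphi^2$ with $E=A^2$, $G=r^2$:
\[
K=-\frac{1}{2\sqrt{EG}}\,\partial_r\!\Bigl(\frac{G_r}{\sqrt{EG}}\Bigr)=-\frac{1}{Ar}\,\partial_r\!\Bigl(\frac1A\Bigr)=-\frac{1}{2r}\,\partial_r\bigl(A^{-2}\bigr).
\]
Hence $K(g_u)=-1$ is equivalent to $\partial_r(A^{-2})=2r$. This gives $A^{-2}=r^2+c(\varphi)$ for a function $c$ on $\Sigma$ alone, that is,
\[
u^2=\frac{1+r^2}{r^2+c(\varphi)},\qquad g_u=\frac{\d r^2}{r^2+c(\varphi)}+r^2\d\varphi^2 .
\]
Next, the outward unit normal to $\{r=r_0\}$ in $g_u$ is $e_2=A^{-1}\partial_r$, and the geodesic curvature is $k=A^{-1}r_0^{-1}=\sqrt{r_0^2+c}/r_0$. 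Condition (2) therefore forces $c(\varphi)=r_0^2\bigl(k(\varphi)^2-1\bigr)$. Since $k>0$, we get $r^2+c\ge r_0^2k^2>0$ for all $r\ge r_0$, so $u$ is well defined, smooth and positive on all of $E$.

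Uniqueness follows because both (1) and (2) were used as equivalences. The general solution of the first-order equation for $A^{-2}$ is determined by its value on $\Sigma$, and the sign of $u>0$ together with $k>0$ fixes the root. As $r\to\infty$ we have $u\to1$ with $u=1+O(r^{-2})$, so $g_u$ is asymptotically locally hyperbolic. In fact $g_u$ is, along each ray, exactly of time-symmetric BTZ form $\frac{\d r^2}{r^2-m}+r^2\d\varphi^2$ with $m=-c(\varphi)$.

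For (3) I would evaluate the Hamiltonian mass from its definition in \cite{CCQW25,KT79}. Normalized so that the BTZ slice has mass $m$, it should give
\[
H^0(g_u)=\frac1{2\pi}\int_0^{2\pi}\bigl(-c(\varphi)\bigr)\d\varphi=\frac1{2\pi}\int_0^{2\pi}r_0^2\bigl(1-k^2\bigr)\d\varphi .
\]
Using \eqref{tgk-1}, $\hat k=\sqrt{1+r_0^2}/r_0$, together with $\d s=r_0\,\d\varphi$, the right-hand side of \eqref{MT} is
\[
\frac1\pi\int_0^{2\pi}\Bigl((1+r_0^2)-r_0k\sqrt{1+r_0^2}\Bigr)\d\varphi .
\]
With $a=\sqrt{1+r_0^2}$, the difference of the integrands, $2(a^2-r_0ka)-r_0^2(1-k^2)$, equals $(r_0k-a)^2+1\ge0$ after completing the square. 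This gives \eqref{MT} pointwise, hence after integration.

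The main obstacle is pinning down the precise normalization and definition of $H^0$ for ALH metrics whose asymptotic parameter $c(\varphi)$ varies with angle. This requires checking that the boundary integral at infinity converges and reduces to the average of $-c$. If the paper's normalization differs from the one above by a constant factor, the completing-the-square step must be redone with that factor, and the pointwise inequality may then need the constraint relating $k$ and $\hat k$ rather than holding trivially.
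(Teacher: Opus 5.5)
\textbf{The gap is the reduction to an ODE along rays: the Gauss curvature formula you use is incomplete.} For an orthogonal metric $E\,\d r^2+G\,\d\varphi^2$,
\[
K=-\frac{1}{2\sqrt{EG}}\Bigl[\partial_r\Bigl(\frac{G_r}{\sqrt{EG}}\Bigr)+\partial_\varphi\Bigl(\frac{E_\varphi}{\sqrt{EG}}\Bigr)\Bigr],
\]
and you dropped the second term. That term is present here. Condition (2) forces $u(r_0,\varphi)=\sqrt{1+r_0^2}/(r_0k(\varphi))$, so $E=A^2$ depends on $\varphi$ whenever $k$ is not constant. With $A=u/\sqrt{1+r^2}$ one gets $K(g_u)=\frac{A_r}{A^3r}-\frac{A_{\varphi\varphi}}{Ar^2}$. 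The condition $K(g_u)=-1$ is then exactly the nonlinear parabolic equation \eqref{Para}. Its diffusion term $u^2u_{\varphi\varphi}/(r(1+r^2))$ is the leafwise Laplacian, which does not disappear in two dimensions. Concretely, your metric $\frac{\d r^2}{r^2+c(\varphi)}+r^2\d\varphi^2$ has
\[
K=-1-\frac{1}{r^2}\Bigl(\frac{3(c')^2}{4(r^2+c)^2}-\frac{c''}{2(r^2+c)}\Bigr),
\]
which equals $-1$ only if $c$ is constant. So your existence and uniqueness claims, the mass formula, and the pointwise comparison all concern a metric that violates (1). What you have actually proved is the constant-$k$ (BTZ) case.

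Even in that case your normalization is off. The paper's $H^0$ gives $H^0(g_m)=m+1$, so your mass should be $\frac{1}{2\pi}\int_0^{2\pi}(1-c)\,\d\varphi$. The completed square then becomes $(r_0k-\sqrt{1+r_0^2})^2\ge0$, with no extra $+1$, and equality holds when $k=\hat k$.

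\textbf{What is needed for general $k$ is the paper's route.} First, treat \eqref{Para} as an initial value problem in $r$ with the above data at $r_0$. The maximum principle, with the explicit ODE solutions as barriers, gives a global positive solution with $\min\{1,f_2\}\le u\le f_1$. Hence $r^2(u-1)\to v_\infty(\varphi)$ and $H^0(g_u)=\frac1\pi\int_0^{2\pi}v_\infty\,\d\varphi$.

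Second, a pointwise comparison cannot work, because $v_\infty$ depends nonlocally on $k$. Replace it by the monotone quantity $\m(r)=\frac1\pi\int_0^{2\pi}(1+r^2)(1-u^{-1})\,\d\varphi$. By \eqref{Para},
\[
\m'(r)=-\frac{r}{\pi}\int_0^{2\pi}\bigl(u+u^{-1}-2\bigr)\,\d\varphi+\frac{1}{\pi r}\int_0^{2\pi}u_{\varphi\varphi}\,\d\varphi\le0 .
\]
The second integral, which is exactly the contribution of the curvature term you omitted, vanishes over $S^1$. Therefore $H^0(g_u)=\lim_{r\to\infty}\m(r)\le\m(r_0)$, and $\m(r_0)$ is the right-hand side of \eqref{MT}.
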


The purpose of this section is to prove this elegant geometric inequality \eqref{MT} by relating it to the  hyperbolic Hamiltonian mass \cite{CCQW25} of the extension $E$ which is determined by the  boundary geometry of $\Sigma$.

Motivated by the hyperbolic version of the mass expression in \cite[Line 8 before Section 2, Page 2358]{ST-CQG07}, we define a new quasi-local mass for the boundary curve $\Sigma$ as
\be\begin{split}
\m_{}(\Sigma):& =\frac{1}{\pi}\int_\Sigma \big(\hat{k}-k\big)\sqrt{1+r_0^2}\d s\\
&=\frac{1}{\pi}\int_{0}^{2\pi}\big(\hat{k}-k\big)r_0\sqrt{1+r_0^2}\d \varphi\\
&=2(1+r^2_0)-\frac{r_0\sqrt{1+r_0^2}}{\pi}\int_{0}^{2\pi}\big(k \circ F^{-1}\big)\d \varphi  \ \ \ \mbox{(by \eqref{tgk-1})}.
\end{split}
\ee
Here $k$ and $\hat k$ are the geodesic curvatures with respect to the physical metric $g$ and the reference metric $g_{-1}$, respectively. This new quasi-local mass $\m_{}(\Sigma)$ might be viewed as an analogue of the Brown-York mass \cite{BY93} in general relativity.

The idea of the proof of Theorem \ref{T1} here is as follows. We make use of the Bartnik-Shi-Tam extension \cite{Ba-JDG93,ST-JDG02} to glue the 2-surface $\Omega$ to the exterior region of the round circle $F(\Sigma)$ in $\mathbb{H}^2$ by identifying the two boundary curves and perturbing the hyperbolic metric in the transverse direction so that the scalar curvature remains $-2$ and the metric is asymptotically hyperbolic. This can be accomplished as in \cite{WY-CAG07} by solving a parabolic partial differential equation for the spherical foliation so that the geodesic curvatures on the
boundary of $\Omega$ and $E=\mathbb{H}^2\setminus B_{r_0}$ match along $\Sigma$, where $B_{r_0}$ is the disk in $\mathbb{H}^2$ enclosed by $F(\Sigma)$.  We show that there is a monotonicity formula for the difference of the integrals of the geodesic curvatures of the boundary curve in the physical space $\Omega$ and in the reference space $\mathbb{H}^2$. One concludes that the theorem is true.

\subsection{Prescribed scalar curvature}

The two-dimensional hyperbolic metric reads
\be
g_{-1}=\frac{\d r^2}{r^2+1}+r^2 \d \varphi^2.\ee

Let $u(r,\varphi)$ be a positive function. Define
\be
g^u=\frac{u^2 \d r^2}{r^2+1}+r^2 \d \varphi^2.
\ee
Direct calculation shows
\be
R(g^u)=-\frac{2}{u^3}\bigg(u+\frac{u^2u_{\varphi\varphi}}{r^2}-\frac{u_{r}(1+r^2)}{r}\bigg).
\ee
We require that $R(g^u)=-2$. Then $u(r,\varphi)$ satisfies the following parabolic equation:
\be\label{Para}
-u_r+\frac{u^2u_{\varphi\varphi}}{r(1+r^2)}=\frac{(u^3-u)r}{1+r^2}.
\ee

The solution of the parabolic equation \eqref{Para} can be compared to the solution of the ODE
\be\label{ODE}
\frac{\d f(r)}{\d r}=\frac{r(f-f^3)}{1+r^2},\ee
which can be solved explicitly.
Denote by
\be\label{f1}
f_1(r)=\Big(1+\frac{K_1}{1+r^2}\Big)^{-\frac{1}{2}}\ee
and
\be\label{f2}
f_2(r)=\Big(1+\frac{K_2}{1+r^2}\Big)^{-\frac{1}{2}}\ee
where

\be
K_1=\bigg[\frac{1}{(\max\limits_{\varphi \in S^1}u(r_0,\varphi))^2}-1\bigg](1+r_0^2),
\ee
and
\be
K_2=\bigg[\frac{1}{(\min\limits_{\varphi \in S^1}u(r_0,\varphi))^2}-1\bigg](1+r_0^2).
\ee

\begin{prop}
Let $r_0>0$ be a fixed constant. Assume that $u(r_0,\varphi)>0$. Then for any $R>r_0$, the parabolic equation \eqref{Para} has a positive solution on $[r_0,R]\times S^1$ satisfying
\be\label{ULS}
  \min\{1,f_2(r)\}\leq u(r,\varphi)\leq f_1(r).
\ee

\end{prop}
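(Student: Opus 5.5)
Here $r$ plays the role of time and \eqref{Para} is solved forward from the initial datum $u(r_0,\cdot)>0$, which we take smooth (otherwise approximate it and pass to the limit using the estimates below). Written as
\[
u_r=\frac{u^2}{r(1+r^2)}u_{\varphi\varphi}-\frac{r(u^3-u)}{1+r^2},
\]
the equation is uniformly parabolic as long as $u$ stays in a compact subset of $(0,\infty)$. The diffusion coefficient $u^2/(r(1+r^2))$ is smooth and bounded for $r\in[r_0,R]$ with $r_0>0$, so there is no degeneracy in $r$. The proof therefore reduces to two things: a maximum principle comparison with the explicit ODE solutions $f_1,f_2$ of \eqref{ODE}, and then a standard continuation argument.

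\textbf{Comparison with the ODE solutions.} First I check directly that $f_i(r)=(1+K_i/(1+r^2))^{-1/2}$ solves \eqref{ODE}. The constants $K_i$ are chosen so that $f_1(r_0)=\max_\varphi u(r_0,\cdot)$ and $f_2(r_0)=\min_\varphi u(r_0,\cdot)$. Both are $\varphi$-independent, so they are exact solutions of \eqref{Para}. They are also positive and finite on $[r_0,\infty)$, because $1+K_i/(1+r^2)$ lies between $1$ and $1+K_i/(1+r_0^2)=f_i(r_0)^{-2}>0$.

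For the upper bound, let $w=u-f_1$. Subtracting the two equations gives a linear equation
\[
w_r=a(r,\varphi)w_{\varphi\varphi}+c(r,\varphi)w,
\qquad a=\frac{u^2}{r(1+r^2)},
\]
where $c$ is bounded as long as $u$ is bounded. The term $u^2u_{\varphi\varphi}$ is handled by writing it as $a\,w_{\varphi\varphi}$, using $(f_1)_{\varphi\varphi}=0$. Since $w(r_0,\cdot)\le 0$, the maximum principle applied to $e^{-Cr}w$ on the torus gives $w\le0$. The same argument gives $u\ge f_2$.

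Finally, $\min\{1,f_2\}\le f_2$, so $u\ge f_2\ge\min\{1,f_2\}$, which is \eqref{ULS}. The minimum with $1$ is harmless and simply records that $f_2$ tends monotonically to $1$.

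\textbf{Existence on $[r_0,R]$.} Short-time existence of a smooth positive solution follows from standard quasilinear parabolic theory on $S^1$. Let $r^*$ be the maximal existence radius. On $[r_0,r^*)$ the comparison above gives
\[
0<c_0\le\min_{[r_0,R]}f_2\le u\le\max_{[r_0,R]}f_1<\infty,
\]
so the equation stays uniformly parabolic with bounded coefficients. Krylov--Safonov or De Giorgi--Nash estimates then give H\"older bounds, and Schauder theory upgrades them to $C^{2+\alpha}$ bounds. In one space dimension one can alternatively differentiate in $\varphi$ and apply the maximum principle to $u_\varphi$. These bounds allow the solution to be extended past $r^*$ whenever $r^*\le R$, so the solution exists on all of $[r_0,R]$.

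\textbf{Main obstacle.} I expect the delicate step to be the comparison argument, because the quasilinear coefficient $u^2$ multiplies $u_{\varphi\varphi}$. I would handle it at a touching point: at a first point where $e^{-Cr}(u-f_1)$ attains a positive maximum, we have $u_{\varphi\varphi}\le0$ and $u_r\ge (f_1)_r$ up to the $C$ correction. The reaction term $-r(u^3-u)/(1+r^2)$ is locally Lipschitz, so its difference from the corresponding term for $f_1$ is at most $C(u-f_1)$. Choosing $C$ larger than this Lipschitz constant produces a contradiction, and this works because $u$ is a priori bounded on the existence interval.
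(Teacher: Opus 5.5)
Your proposal is correct and follows essentially the same route as the paper: a maximum-principle comparison of $u$ with the $\varphi$-independent ODE solutions $f_1,f_2$, which are matched to $\max u(r_0,\cdot)$ and $\min u(r_0,\cdot)$. The differences are only technical. The paper gets strictness at the first touching point by using $\lambda f_1$ with $\lambda>1$ as a strict supersolution and then letting $\lambda\to 1$ (and $\lambda<1$ for $f_2$), whereas you use the weight $e^{-Cr}$; you also spell out the short-time existence and continuation argument on $[r_0,R]$, which the paper leaves implicit, and you obtain the slightly stronger bound $u\ge f_2$.
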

The proof is based on an application of the maximum principle. For any $\lambda>1$, we have
\be\label{Pf}
\begin{split}
\frac{\d \big(\lambda f_1\big)}{\d r} & = \frac{\lambda r(f_1-f_1^3)}{1+r^2}\\
& > \frac{ r(\lambda f_1-\lambda^3 f_1^3)}{1+r^2}.
\end{split}
\ee
It follows from \eqref{Para} and \eqref{Pf} that
\be\label{CI}
-\frac{\partial \big(u-\lambda f_1\big)}{\partial r} +\frac{u^2\big(u-\lambda f_1\big)_{\varphi\varphi}}{r(1+r^2)}> \frac{r}{1+r^2}\Big(-\big(u-\lambda f_1 \big) +\big(u^3-\lambda^3 f_1^3\big)  \Big).
\ee
Now suppose that $u>\lambda f_1$ somewhere. Note that initially $u(r_0,\varphi)<\lambda f_1(r_0)$; there exists a point $(\hat r,\hat \varphi)$ such that
$u(\hat r,\hat \varphi)=\lambda f_1(\hat r)$ and $u(r,\varphi)\leq \lambda f_1(r)$ for all $r\leq \hat r$.
Note that, at the point $(\hat r,\hat \varphi)$,
\be
-\frac{\partial \big(u-\lambda f_1\big)}{\partial r}|_{(\hat r,\hat \varphi)} +\frac{u^2\big(u-\lambda f_1\big)_{\varphi\varphi}}{r(1+r^2)}|_{(\hat r,\hat \varphi)} \leq 0.\ee
On the other hand,
\be
\frac{r}{1+r^2}\Big(-\big(u-\lambda f_1 \big) +\big(u^3-\lambda^3 f_1^3\big)  \Big)|_{(\hat r,\hat \varphi)}=0.
\ee
This leads to a contradiction with the strict inequality \eqref{CI} and shows that $u(r,\varphi) \leq \lambda f_1(r)$. Since $\lambda>1$ is arbitrary, we have
$u(r,\varphi) \leq  f_1(r)$. The lower bound inequality $\min\{1,f_2(r)\}\leq u(r,\varphi)$ can be obtained similarly. By taking the limit as $r\rightarrow \infty$, \eqref{f1}, \eqref{f2} and \eqref{ULS} yield that $r^2(u-1)$ converges to a smooth function $v_\infty(\varphi)$ on $S^1$.

\subsection{The mass expression}

We fix the background metric as
\be\label{back}
g_{-1}=\frac{\d r^2}{1+r^2}+r^2\d \varphi^2, \ \ \varphi \in [0,2\pi].\ee

A natural class of metrics associated with the background metric \eqref{back} is provided by the asymptotically locally hyperbolic (ALH) metrics \cite[Page 24, Eqn (4.59)]{CCQW25}, which can be rewritten in the form
\be
g=g_{-1}+\frac{\mu_{ij}\theta^i\theta^j}{r^2}+\mathcal{O}(r^{-3}),\ee
where $\mu_{ij}=\mu_{ij}(\varphi)$ is the mass aspect tensor and $\theta^1=r\d \varphi$, $\theta^2=\frac{1}{\sqrt{1+r^2}}\d r$.

The Hamiltonian mass \textit{\`{a} la} Kijowski-Tulczyjew reads \cite[Page 24, Eqn (4.60)]{CCQW25}
\be\label{H0}
H^0=\frac{1}{2\pi}\int_{\partial M}\big(\mu_{22}+2\mu_{11}\big)\d \varphi= \frac{1}{2\pi}\int_{\partial M}\mu \d \varphi,
\ee
where $\mu=\mu_{22}+2\mu_{11}$ is called the mass aspect function.

Note that the geodesic curvatures of $\Sigma_r$ with respect to $g^u$ and $g^{(u=1)}=g_{-1}$, denoted by $k_u$ and $k_{(u=1)}=k_{1}$ respectively, are related by
\be k_u=u^{-1}k_{1}. 
\ee It follows from \eqref{tgk-1} that $k_1=\frac{\sqrt{1+r^2}}{r}$. Set $u(r_0,\varphi)=\frac{\sqrt{r_0^2+1}}{r_0 k(\varphi)}$. Then the geodesic curvature of $F(\Sigma)$ in the metric $g^u$ equals the geodesic curvature of $\Sigma$ in $(\Omega, g)$ for the same value of $\varphi$.

Let $v=r^2(u-1)$ and recall the expansion of $g^u$:
\be
\begin{split}
g^u &=\big(1+\frac{v}{r^2}\big)^2(\frac{1}{1+r^2})\d r^2 +r^2\ d \varphi^2\\
&=\frac{\d r^2}{1+r^2}+r^2 \d \varphi^2+ \frac{2v_\infty}{r^2}\big(\frac{\d r}{\sqrt{1+r^2}}\big)^2 +\mathcal{O}(r^{-3}).
\end{split}
\ee
Thus,
\be
\begin{split}
H^0&=\frac{1}{2\pi}\int_\Sigma \big(2v_\infty\big)\d \varphi\\
&=\frac{1}{\pi}\int_\Sigma v_\infty\d \varphi.
\end{split}
\ee

\subsection{A monotonicity formula}
Suppose that $u(r,\varphi)$ is a positive function. Let
\be
\m(r)=\frac{1}{\pi}\int_{0}^{2\pi}\Big(r(k_{1}-k_u)\sqrt{1+r^2}\Big)\d \varphi.\ee
Then one has the following monotonicity formula:
\be\label{mono}
\begin{split}
\frac{\d \m(r)}{\d r}&=\frac{1}{\pi}\frac{\d }{\d r}\int_0^{2\pi}r(k_1-k_u)\sqrt{1+r^2}\d\varphi\\
&=\frac{1}{\pi}\int_0^{2\pi}\frac{\partial }{\partial r}\Big(r(k_1-k_u)\sqrt{1+r^2}\Big)\d\varphi\\
&=\frac{1}{\pi}\int_0^{2\pi}\big(-r(u+\frac{1}{u}-2)+u_{\varphi\varphi}\big)\d\varphi\\
& =-\frac{r}{\pi}\int_0^{2\pi}(u+\frac{1}{u}-2)\d\varphi\\
&\leq 0.
\end{split}
\ee

The following inequalities complete the proof of Theorem \ref{T1}.
\be
\begin{split}
\m(\Sigma)&=\frac{1}{\pi}\int_0^{2\pi}r_0\big(\hat k - k\big)\sqrt{1+r^2_0}\d \varphi\\
&=\frac{1}{\pi} \int_0^{2\pi}\Big(r(k_1-k_u)\sqrt{1+r^2}\Big)|_{r=r_0}\d\varphi\\
&\geq \lim_{r\rightarrow \infty}\frac{1}{\pi} \int_0^{2\pi}r(k_1-k_u)\sqrt{1+r^2}|_{r}\d\varphi \ \ \ \ \mbox{(by \eqref{mono})}\\
&=\lim_{r\rightarrow \infty}\frac{1}{\pi}\int_0^{2\pi}(1+r^2)(1-u^{-1})\d \varphi\\
&=\frac{1}{\pi}\int_0^{2\pi}v_\infty \d \varphi\\
&=H^0.
\end{split}
\ee

\section{Limits of the mass $\m(\Sigma)$ for large ellipses}

In this section, we study the limiting behaviour of the new quasi-local mass $\m(\Sigma)$ for ellipses in the Ba\~{n}ados-Teitelboim-Zanelli manifold. 

The metric we consider here is of the form 
\begin{equation}\label{BTZ}
g_m=\frac{1}{r^2-m}\d r^2+r^2\d\varphi^2
\end{equation}
where $m\in \mathbb{R}$ is a constant and the angle variable $\varphi \in [0,2\pi)$.

When $m>0$, by making the change of coordinate $r=\sqrt{m}\cosh\rho$, the metric \eqref{BTZ} can be rewritten as
\be
g_m=\d \rho^2+ m\cosh^2\rho \d \varphi^2.\ee
It can be defined on the region $(r,\varphi) \in [\sqrt{m},+\infty)\times [0,2\pi)$. In the physics literature, it is known as the time-symmetric slices of non-rotating BTZ black holes \cite{BTZ92} (also known as funnels in hyperbolic geometry). The boundary geodesic $r=\sqrt{m}$ is referred to as the horizon. The metric $g_m$ has constant scalar curvature $R(g_m)=-2$ and the Hamiltonian mass $H^0(g_m)$ equals $m+1$ \cite{CCQW25}.

When $m=0$, the associated surface with the metric \eqref{BTZ} is called the hyperbolic trumpet and $r=0$ is not included in the manifold.

For $m=-1$, the metric \eqref{BTZ} is actually the standard hyperbolic metric in polar coordinates $g_{-1}=\frac{\d r^2}{1+r^2}+r^2\d \varphi^2$ and the associated surface is just the standard hyperbolic 2-space $\mathbb{H}^2$.

However, when $m<0$ but $m\neq -1$, it is not globally isometric to the standard hyperbolic metric $g_{-1}$. By the change of coordinates $r=\sqrt{-m}\sinh \rho$ and $\varphi= \frac{\tilde \varphi}{\sqrt{-m}}$, the metric \eqref{BTZ} becomes $\d \rho^2+ \sinh^2\rho \d \tilde\varphi^2$. Thus, $\rho=0$ is a conical singularity with cone angle $2\pi\sqrt{-m}$.

The family of ellipses $\Sigma_R$ we consider is parameterised as
\be\label{ct}
r^2\cos^2\varphi + \frac{r^2\sin^2\varphi}{(1+\epsilon)^2}=R^2.
\ee

Note that the definition of $\m(\Sigma_R)$ relies only on the geometric data in a neighbourhood of $\Sigma_R$. The unit conormal vector of $\Sigma_R$ is
\begin{eqnarray}
n=f(r,\varphi)\bigg[\sqrt{\cos^2\varphi+\frac{\sin^2\varphi}{(1+\epsilon)^2}}\d r-
\frac{r\epsilon(\epsilon+2)\sin\varphi\cos\varphi}{(1+\epsilon)^2\sqrt{\cos^2\varphi+\frac{\sin^2\varphi}{(1+\epsilon)^2}}}\d\varphi\bigg],
\end{eqnarray}
where
\begin{eqnarray}
f(r,\varphi)=\bigg[(r^2-m)(\cos^2\varphi+\frac{\sin^2\varphi}{(1+\epsilon)^2})+\frac{\epsilon^2(2+\epsilon)^2\sin^2(2\varphi)}{4(1+\epsilon)^4(\cos^2\varphi+\frac{\sin^2\varphi}{(1+\epsilon)^2})}\bigg]^{-\frac{1}{2}}.
\end{eqnarray}
Direct calculation shows that the geodesic curvature of $\Sigma_R$ has the following asymptotic behaviour as $R$ goes to infinity:
\begin{eqnarray}
k=1-\frac{1}{R^2}k_2(\varphi)+\mathcal{O}(\frac{1}{R^3}),
\end{eqnarray}
where
\be
\begin{split}
k_2(\varphi)=&\frac{1}{8(1+\epsilon)^2\big(2+2\epsilon+\epsilon^2
+\epsilon(2+\epsilon)\cos2\varphi\big)}\bigg[8m+16m\epsilon\\
&+28\epsilon^2+20m\epsilon^3+28\epsilon^3+12m\epsilon^3+7\epsilon^4+3m\epsilon^4\\
&+4(2+m)\epsilon(4+6\epsilon+4\epsilon^2+\epsilon^3)\cos2\varphi\\
&+(1+m)\epsilon^2(2+\epsilon)^2\cos4\varphi\bigg].
\end{split}
\ee

Furthermore, the induced metric on $\Sigma_R$ is
\be
\begin{split}
\d l^2&=\bigg[\frac{r^2\epsilon^2(\epsilon+2)^2\sin^2(2\varphi)}{4(r^2-m)\big((1+\epsilon)^2\cos^2\varphi+\sin^2\varphi\big)^2}+r^2\bigg]\d\varphi^2
\end{split}
\ee
which yields
\be
\begin{split}
\d l&=\bigg[\frac{r^2\epsilon^2(\epsilon+2)^2\sin^2(2\varphi)}{4(r^2-m)\big((1+\epsilon)^2\cos^2\varphi+\sin^2\varphi\big)^2}+r^2\bigg]^{\frac{1}{2}}\d\varphi\\
&=\bigg[l_1R+\frac{l_2}{R}+\mathcal{O}(R^{-3})\bigg]\d\varphi,
\end{split}
\ee
where
\begin{eqnarray}
&&l_1=\sqrt{\frac{2(1+\epsilon)^2}{2+2\epsilon+\epsilon^2+\epsilon(2+\epsilon)\cos 2\varphi}},\\
&&l_2=\frac{\sqrt{2}\epsilon^2(2+\epsilon)^2\cos^2\varphi\sin^2\varphi}{
(1+\epsilon)\big(2+2\epsilon+\epsilon^2+\epsilon(2+\epsilon)\cos2\varphi\big)^{3/2}}.
\end{eqnarray}

Let $R_0(R)$ be
\be
\begin{split}
R_0(R):=&\frac{1}{2\pi}\int_{\Sigma_R}\d l\nonumber\\
=&\frac{R}{2\pi}\int_0^{2\pi}l_1\d\varphi+\frac{1}{2\pi R}\int_0^{2\pi}l_2d\varphi+\mathcal{O}(R^{-3}).
\end{split}
\ee
Then we have
\be
\begin{split}
\hat{k}&=\frac{\sqrt{1+R_0^2}}{R_0}=1+\frac{1}{2R^2c^2}+\mathcal{O}(R^{-3}).
\end{split}
\ee
where
\begin{eqnarray}
    c=\frac{1}{2\pi}\int_0^{2\pi}l_1\d\varphi.
\end{eqnarray}

Finally, the quasi-local mass  $\m(\Sigma_R)$  is
\be
\begin{split}
\m(\Sigma_R)&=\frac{1}{\pi}\int_{0}^{2\pi}\big(\hat{k}-k\big)R_0\sqrt{1+R_0^2}\d \varphi\\
&=\frac{1}{\pi}\int_0^{2\pi}\bigg[\frac{1}{2}+k_2(\varphi)c^2\bigg]\d\varphi+\mathcal{O}(R^{-2}).
\end{split}
\ee
As $R\rightarrow +\infty$, the limit of $\m(\Sigma_R)$ (depending on $m$ and $\epsilon$) becomes
\be
\begin{split}
\m_{\infty}(m,\epsilon):=&\lim_{R\rightarrow+\infty}\m(\Sigma_R)\\
=&\frac{1}{\pi}\int_0^{2\pi}\bigg[\frac{1}{2}+k_2(\varphi)c^2\bigg]\d\varphi.
\end{split}
\ee
For the coordinate round spheres, i.e., $\epsilon=0$, we have $k_2=\frac{m}{2},l_1=1,c=1$ which yields
\be
\begin{split}
\m_{\infty}(m,\epsilon=0)=m+1.
\end{split}
\ee

For $\epsilon=1$ and $m=1$, numerical calculation shows
\begin{eqnarray}
\m_{\infty}(m=1,\epsilon=1)=2.8848\neq 1+1.
\end{eqnarray}

For small $\epsilon$, we have
\begin{eqnarray}
&&l_1=1+\frac{1}{2}(1-\cos2\varphi)\epsilon+
\frac{3}{8}[\cos^2(2\varphi)-1]\epsilon^2+O(\epsilon^3),\\
&&k_2=\frac{m}{2}+\frac{1}{2}(4\cos2\varphi-M+M\cos2\varphi)\epsilon\nonumber\\
&&\ \ \ \ \ +\frac{1}{4}\bigg[7+4m-12\cos(2\varphi)-3m\cos(2\varphi)
-8\cos^2(2\varphi)\nonumber\\
&&\ \ \ \ \ -2m\cos^2(2\varphi)
+\cos(4\varphi)+m\cos(4\varphi)\bigg]\epsilon^2+O(\epsilon^3),
\end{eqnarray}
which yields
\begin{eqnarray}
c=1+\frac{\epsilon}{2}-\frac{3\epsilon^2}{16}+O(\epsilon^3),
\end{eqnarray}
and
\begin{eqnarray}
\m_{\infty}(m,\epsilon)=m+1+\frac{3(m+4)}{8}\epsilon^2+O(\epsilon^3).
\end{eqnarray}

In Fig.\ref{fig1}, we plot the graph of $\m_{\infty}(m,\epsilon)$ with respect to general $\epsilon$ for $m=1,0,-1$ respectively. It can be seen that the quasi-local mass $\m(\Sigma_R)$ may not approach the Hamiltonian mass $m+1$ if the ellipses are not coordinate round spheres.

\begin{figure}
\begin{tabular}{c}
\includegraphics[width=0.6\textwidth]{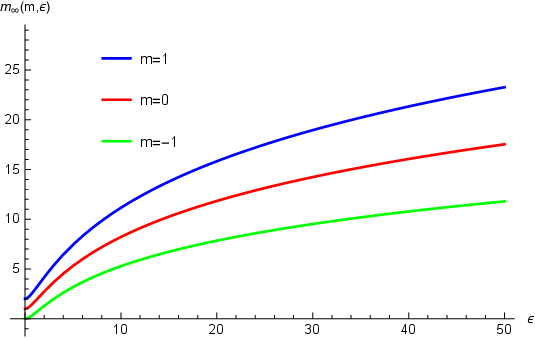}
\end{tabular}
\caption{Graph of $\m_{\infty}(m,\epsilon)$ with respect to $\epsilon$ for $m=1,0,-1$.}\label{fig1}
\end{figure}

\section{The $(2+1)$-dimensional hyperbolic Bartnik mass}
The Bartnik data is a triple $(\Sigma,r_0,k)$ consisting of a simple closed curve $\Sigma$ with length $2\pi r_0$ and a given function $k$ on $\Sigma$.
The  $(2+1)$-dimensional hyperbolic Bartnik quasi-local mass of the triple $(\Sigma, r_0, k)$ can be defined as follows.
\begin{equation*}
\begin{split}
&\ \ \ \mb(\Sigma , r_0 ,k)\\
&:= \inf \left \lbrace H^0(g) \,\vert \,(M, g) \text{ is an admissible hyperbolic extension of }(\Sigma, r_0, k)\right \rbrace.
\end{split}\end{equation*}
Here we say an asymptotically locally hyperbolic $2$-manifold $(M, g)$ with boundary $\p M$ is  an {admissible hyperbolic extension of $(\Sigma, r_0, k)$} if
\begin{itemize}
\item[i)] The Gauss curvature $K(g)$ of $g$ is no less than $-1$;
\item[ii)]   $\p M$ with the induced metric is isometric to  $\Sigma$ and, under the isometry, the geodesic curvature of $\p M$ in
$(M, g) $ equals $k$; and
\item[ iii)]  $(M, g)$ satisfies certain non-degeneracy conditions that prevent $H^0(g)$ from being arbitrarily small; for instance,
it is often required that $(M, g)$  contain no closed geodesics (enclosing $\p M$), or that $\p M$ be outer-minimising in $(M, g)$.
\end{itemize}
As in Section \ref{Mt1}, one can isometrically embed the boundary curve $\Sigma$ as the round circle $F(\Sigma)=\{r=r_0\}$ into the hyperbolic 2-space $\mathbb{H}^2$. Denote by $\hat k$ the geodesic curvature of the embedded round circle $F(\Sigma)$ in $\mathbb{H}^2$. Given any positive function $k>0$ on $\Sigma$, there is a unique function $ u > 0 $ on $M =\mathbb{H}^2\setminus F(\Sigma)$ such that the metric $ g_u = \frac{u^2 }{1+r^2}\d r^2 + r^2 \d \varphi^2 $ is asymptotically locally hyperbolic and
\begin{itemize}
\item[(1)] The Gauss curvature $K(g_u)$ of $g_u$ equals $-1$;
\item[(2)] The geodesic curvature of $ \Sigma = \p M $ in $(M, g_u)$ equals $k$; and
\item[(3)] The Hamiltonian mass $H^0(g_u)$ satisfies
$$
 H^0(g_u) \leq \frac{1}{\pi}\int_\Sigma \big(\hat{k}-k\big)\sqrt{1+r_0^2}\d s,
$$
where $\hat k$ is the geodesic curvature with respect to the metric $g_{-1}$.
\end{itemize}
Consequently, since $(M, g_u)$ is foliated by a $1$-parameter family of circles $\{ \Sigma_r \}_{r \geq r_0}$ with positive geodesic curvature,
it follows that
\be \label{oldb}
\begin{split}
 \mb(\Sigma, r_0, k) &\leq \frac{1}{\pi}\int_{\Sigma_{r_0}} \hat{k}\sqrt{1+r_0^2}\d s  -\frac{1}{\pi}\int_{\Sigma} k\sqrt{1+r_0^2} \d s\\
&=2(1+r_0^2) -\frac{1}{\pi}\int_{\Sigma} k\sqrt{1+r_0^2} \d s.
\end{split}
\ee
 regardless of the non-degeneracy condition iii) in the definition of $\mb(\Sigma, r_0,k)$.
 
In what follows, we apply the method in \cite{MX19} to obtain an improved upper bound estimate for the $(2+1)$-dimensional hyperbolic Bartnik mass.

\begin{thm}\label{MTV}
Let $ \Sigma$ be a simple closed curve with length $L=2\pi r_0$ and $k$ be a positive function on $\Sigma$ or identically zero $k\equiv 0$. There exists an asymptotically locally hyperbolic $2$-manifold $(M, g) $, diffeomorphic to $ \Sigma \times [r_0, \infty)$,
such that

\begin{itemize}
\item[i)] The Gauss curvature $K(g_u)$ of $g_u$ equals $-1$;
\item[ii)] $\p M$ with the induced metric is isometric to  $(\Sigma, r_0)$ and the geodesic curvature of $\p M$ in
$(M, g $) equals $k$ under the isometry;
\item[iii)]
$(M, g)$  is foliated by a $1$-parameter family of closed curves
with positive geodesic curvature;
and
\item [iv)] The Hamiltonian mass $H^0(g_u)$ satisfies
\bee\label{new}
 H^0(g_u) \leq 1+r_0^2.
\eee
Consequently,
\be\label{newb}
 \mb(\Sigma, r_0, k) \leq 1+r_0^2.
\ee
\end{itemize}
\end{thm}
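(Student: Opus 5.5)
The plan is to follow the method of \cite{MX19}: first deform the boundary data through a short collar so that it reaches a circle of \emph{zero total curvature defect}, and then attach the Shi--Tam type extension of Theorem \ref{T1} to that collar. Concretely, I will build a collar $\Sigma\times[r_0,r_1]$ carrying a metric of the form $\d t^2$-type warped product (or a $g^u$-type metric $\frac{u^2}{1+r^2}\d r^2+r^2\d\varphi^2$) with Gauss curvature $-1$. Its inner boundary will have length $2\pi r_0$ and geodesic curvature $k$. Its outer boundary will be a round circle $\{r=r_1\}$ with \emph{constant} geodesic curvature. The simplest choice is to solve the parabolic equation \eqref{Para} from $r_0$ with initial data $u(r_0,\varphi)=\frac{\sqrt{1+r_0^2}}{r_0k(\varphi)}$ when $k>0$. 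This already gives i)--iii) on all of $[r_0,\infty)\times S^1$, with each $\Sigma_r$ of geodesic curvature $u^{-1}\sqrt{1+r^2}/r>0$. The only real task is then iv), a bound on $H^0$ that is \emph{independent of $k$}.

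For iv) I will use the monotonicity formula \eqref{mono} not from $r_0$ but from an intermediate radius, combined with a better initial profile. The idea is to exploit the freedom in the collar: instead of the Bartnik--Shi--Tam metric starting at $r_0$, first glue a BTZ-type piece. Take the funnel/trumpet/cone metric $g_m=\frac{\d r^2}{r^2-m}+r^2\d\varphi^2$ with $m=-1$, i.e.\ the hyperbolic metric itself, near $\Sigma$, and allow the geodesic curvature to be adjusted by a conformal/warped collar of arbitrarily small width. Along such a collar the length stays $\approx2\pi r_0$ while the curvature is pushed to the constant $\hat k$ or smaller. By Gauss--Bonnet on the collar, the total geodesic curvature can only increase when $K=-1$ and the area is small. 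Hence the new outer data have $\int k\,\d s\ge 0$ up to an arbitrarily small error; in the case $k\equiv0$ this holds exactly. Then \eqref{oldb} applied at the outer circle gives
\[
H^0\le 2(1+r_0^2)-\frac{1}{\pi}\int_\Sigma k\sqrt{1+r_0^2}\,\d s .
\]
This bound alone does not yield $1+r_0^2$, so the key refinement is to choose the collar so that the outer total curvature equals exactly $\pi\sqrt{1+r_0^2}$ per unit, i.e.\ $\int k\,\d s=\pi\sqrt{1+r_0^2}\cdot\frac{r_0}{\sqrt{1+r_0^2}}\cdot\ldots$. Concretely, I will aim for outer data equal to constant curvature $\hat k/2$ on a circle of essentially the same length, and then the right side becomes $2(1+r_0^2)-(1+r_0^2)=1+r_0^2$.

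The construction of this collar follows \cite{MX19}. Take $g=\d t^2+\rho(t,\varphi)^2\d\varphi^2$ on $[0,\delta]\times S^1$ with $\rho_{tt}=\rho$, which gives Gauss curvature $-1$. Set $\rho(0,\cdot)=r_0$ (by the parametrization in Proposition \ref{prop:isometric-embedding}) and $\rho_t(0,\cdot)=r_0k$, so that the inner boundary realises $(\Sigma,r_0,k)$. Then $\rho=r_0\cosh t+r_0k\sinh t$, and the curvature of the slice is $\rho_t/\rho$. Since this form is not constant, I will instead run the parabolic flow \eqref{Para}. The comparison functions \eqref{f1}, \eqref{f2} show that $u\to1$ uniformly as $r\to\infty$, so the slices become asymptotically round in the required sense. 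I then evaluate $\m(r)$ at a large radius $r_1$ and use \eqref{mono} beyond $r_1$. The main obstacle is exactly the sharp bound, namely showing $\lim\m(r)\le 1+r_0^2$ rather than $2(1+r_0^2)$. For this I will try to use the integrated form of \eqref{mono}, $-\frac{r}{\pi}\int(u+u^{-1}-2)$, together with the lower bound $u\ge\min\{1,f_2\}$, to absorb half of the reference term $2(1+r^2)$. Alternatively, following \cite{MX19}, I will rescale the collar so that the outer circle has curvature arranged to make the $k$-term equal $1+r_0^2$. The degenerate case $k\equiv0$ will be handled by first making the curvature slightly positive in a thin collar, which is allowed by Gauss--Bonnet, and then taking a limit.
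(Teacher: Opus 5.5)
Your setup matches the paper's: solving \eqref{Para} from $u(r_0,\cdot)=\sqrt{1+r_0^2}/(r_0k)$ gives i)--iii). But item iv), which you rightly call the whole content, is never proved. You list several strategies, carry none of them out, and the main one fails.

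On a collar $C$ with $K=-1$ between $\Sigma$ and an outer curve $\Sigma'$, Gauss--Bonnet gives $\int_{\Sigma'}k'\,\d s'=\int_\Sigma k\,\d s+|C|$. So a thin collar leaves the total geodesic curvature, and hence the right side of \eqref{oldb}, essentially unchanged. Your ``key refinement'' (outer curvature $\hat k/2$ on a curve of length about $2\pi r_0$) is therefore out of reach whenever $\int_\Sigma k\,\d s$ is appreciably below $\pi\sqrt{1+r_0^2}$. That is exactly the case in which the theorem says more than \eqref{oldb}. A thick collar loses control of the outer length, and since $K=-1$ fixes the scale there is nothing to ``rescale''.

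The loss is in fact built into the hyperbolic reference. Even for $k\equiv0$ and the exact extension $g_{r_0^2}$, the right side of \eqref{oldb} at $\{r=r_1\}$ equals $2(1+r_1^2)-2\sqrt{(1+r_1^2)(r_1^2-r_0^2)}$. By AM--GM this is strictly larger than $1+r_0^2$ for every finite $r_1$. Likewise, the lower barrier $u\ge\min\{1,f_2\}$ can only bound $H^0$ from below. For $k\equiv0$, your limiting argument, even granting the case $k>0$, yields at best \eqref{newb} for the infimum, not one extension satisfying iv).

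The missing idea is to change the reference rather than the data; this is what the method of \cite{MX19} (Miao's refined Shi--Tam monotonicity) actually is. Compare $g_u$ with the BTZ metric $g_m$, $m<r_0^2$, weighted by its static potential $N=\sqrt{(r^2-m)/(1+r^2)}$. Using \eqref{Para}, $\frac{\d}{\d r}\int_{\Sigma_r}(k_m-k_u)N\sqrt{1+r^2}\,\d s=-\int_0^{2\pi}\frac{r\,(u\sqrt{r^2-m}-\sqrt{r^2+1})^2}{u\sqrt{r^2-m}\,\sqrt{r^2+1}}\,\d\varphi\le0$. The limit as $r\to\infty$ is $\pi\big(H^0(g_u)-1-m\big)$. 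Evaluating at $r_0$ gives $H^0(g_u)\le\big(\sqrt{r_0^2-m}-r_0\bar k\big)^2+1+r_0^2-r_0^2\bar k^2$, where $\bar k=\frac1{2\pi}\int_0^{2\pi}k\,\d\varphi$. The choice $m=r_0^2(1-\bar k^2)$ finishes the case $k>0$, and for $k\equiv0$ one takes $g_{r_0^2}$ itself.

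Two of your own ingredients would also have closed the gap. First, the \emph{upper} barrier $u\le f_1$ gives $v_\infty\le-K_1/2$, i.e.\ $H^0(g_u)\le1+r_0^2-r_0^2(\min k)^2$ at once. Second, the Fermi-coordinate metric $\d t^2+r_0^2(\cosh t+k\sinh t)^2\d\varphi^2$ that you discarded already works. It has $K=-1$, the right boundary data, and slices of curvature $\frac{\sinh t+k\cosh t}{\cosh t+k\sinh t}>0$ for $t>0$. After $\d\psi=\frac{1+k}{1+\bar k}\d\varphi$ and $r=\sinh\big(t+\ln(r_0(1+\bar k))\big)$ it reads $\frac{\d r^2}{1+r^2}+\big(r^2+\beta+\tfrac12+O(r^{-2})\big)\d\psi^2$, with $\beta=\frac{r_0^2(1+\bar k)^2(1-k)}{2(1+k)}$ and $k$ evaluated at $\varphi(\psi)$. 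So \eqref{H0} gives $H^0=1+r_0^2(1-\bar k^2)$ exactly, and for $k\equiv0$ this metric is $g_{r_0^2}$.
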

In the case of $k\equiv 0$, the constructed extensions in this setting represent time-symmetric vacuum initial data with horizon boundary. Note that when the given function $k$ is sufficiently small, the estimate \eqref{newb} is much better than \eqref{oldb}.

Let us first prove Theorem \ref{MTV} for the case when $k$ is positive.
Let $m$ be a parameter lying in the interval $(-\infty,r_0^2)$ and let $N(r)=\sqrt{\frac{r^2-m}{1+r^2}}$. We consider the BTZ metric \cite{BTZ92}
\bee
g_m=\frac{1}{r^2-m}\d r^2+r^2\d\varphi^2.
\eee
\begin{prop}\label{P1}
The quantity
\be \label{Q}
\int_{ \Sigma_r }  (k_m - k_u) N(r) \sqrt{1+r^2} \d s
\ee
is non-increasing in $r$ on $[r_0,+\infty)$.
Here $k_m$ and $k_u$ are the geodesic curvatures of the curve $\Sigma_r=\{r=\mbox{const.}\}$ with respect to $g_m$ and $g_u$, respectively.
\end{prop}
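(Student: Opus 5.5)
The plan is to repeat the monotonicity computation \eqref{mono} with the BTZ metric $g_m$ as the background in place of $g_{-1}$. Here $g_u$ is taken in the form adapted to $g_m$, namely $g_u=\frac{u^2}{r^2-m}\d r^2+r^2\d\varphi^2$ with $K(g_u)=-1$, in the spirit of \cite{MX19}. This makes sense on $[r_0,\infty)$ because $m<r_0^2$ forces $r^2-m>0$ there.

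\emph{Step 1: the integrand.} The outward unit normal of $\Sigma_r$ is $\sqrt{r^2-m}\,u^{-1}\partial_r$. The same divergence computation as in \eqref{tgk-1} gives
\[
k_m=\frac{\sqrt{r^2-m}}{r},\qquad k_u=u^{-1}k_m .
\]
With $\d s=r\,\d\varphi$ and $N(r)\sqrt{1+r^2}=\sqrt{r^2-m}$, the quantity \eqref{Q} becomes
\[
Q(r)=\int_0^{2\pi}(r^2-m)\bigl(1-u^{-1}\bigr)\,\d\varphi .
\]

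\emph{Step 2: the curvature equation for $u$.} This is the only genuinely computational step. Applying the curvature formula used for $R(g^u)$ to the warped form above and imposing $R(g_u)=-2$ should give the analogue of \eqref{Para}:
\[
(r^2-m)\,u_r=r\,(u-u^3)+\frac{u^2u_{\varphi\varphi}}{r}.
\]
As a consistency check, setting $m=-1$ recovers \eqref{Para} exactly. I expect verifying this equation (signs and powers of $u$) to be the main obstacle. The cleanest route is to write $g_u=(u\,\d\rho)^2+r(\rho)^2\d\varphi^2$ with $\d\rho=\d r/\sqrt{r^2-m}$ and use the orthonormal coframe $\{u\,\d\rho,\ r\,\d\varphi\}$. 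A cheaper alternative is to redo the $m=-1$ derivation while keeping $r^2-m$ symbolic.

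\emph{Step 3: differentiate and integrate.} Differentiating under the integral sign,
\[
\frac{\d Q}{\d r}=\int_0^{2\pi}\Bigl(2r\bigl(1-u^{-1}\bigr)+(r^2-m)\frac{u_r}{u^2}\Bigr)\d\varphi .
\]
Substituting Step 2 gives $(r^2-m)u_r/u^2=r(u^{-1}-u)+u_{\varphi\varphi}/r$, so the integrand simplifies to
\[
-r\bigl(u+u^{-1}-2\bigr)+\frac{u_{\varphi\varphi}}{r}.
\]
The term $u_{\varphi\varphi}/r$ integrates to zero over $S^1$ by periodicity. Since $u>0$, we have $u+u^{-1}-2=(\sqrt u-1/\sqrt u)^2\ge0$, hence
\[
\frac{\d Q}{\d r}=-r\int_0^{2\pi}\bigl(u+u^{-1}-2\bigr)\d\varphi\le0 .
\]

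\emph{Regularity.} Positivity and smoothness of $u$ on $[r_0,\infty)\times S^1$ are needed to justify differentiating under the integral. I would obtain these exactly as in the Proposition giving \eqref{ULS}: compare $u$ with solutions of the ODE $\frac{\d f}{\d r}=\frac{r(f-f^3)}{r^2-m}$, whose explicit solutions are $f=\bigl(1+\frac{K}{r^2-m}\bigr)^{-1/2}$. The maximum principle argument then carries over with $1+r^2$ replaced by $r^2-m$.
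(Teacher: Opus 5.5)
Your argument is correct. It is the paper's computation written in a different normalisation of $u$, which makes it a mild variant rather than a genuinely new route.

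\emph{What the paper does.} It keeps the $g_{-1}$-normalised $u$ solving \eqref{Para}. It writes the quantity as $\int_0^{2\pi}\bigl((r^2-m)-u^{-1}\sqrt{(r^2+1)(r^2-m)}\bigr)\d\varphi$, differentiates, substitutes \eqref{Para}, and completes a square to obtain
\[
-\int_0^{2\pi}\frac{r\,\bigl(u\sqrt{r^2-m}-\sqrt{r^2+1}\bigr)^2}{u\sqrt{r^2-m}\sqrt{r^2+1}}\,\d\varphi .
\]

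\emph{How your version relates.} Your $u$ is the paper's $u$ multiplied by $N(r)$, since $\frac{(uN)^2}{r^2-m}=\frac{u^2}{1+r^2}$, so both describe the same metric. Under $\tilde u=uN$, the quantity $r(\tilde u+\tilde u^{-1}-2)$ is exactly the paper's integrand above.

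\emph{What each buys.} Your choice turns the proof into a verbatim copy of \eqref{mono} and makes the sign obvious. The cost is deriving the $m$-adapted curvature equation, which you have correct. The paper's choice avoids any new PDE but needs the less transparent completion of the square.

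\emph{Two points to fix in the write-up.}

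First, state explicitly that your $g_u$ is the same metric as in Theorem \ref{T1}, only rewritten, and not a new extension for each $m$. Proposition \ref{P2} and the later choice $m=r_0^2\bigl(1-\bar k_u^2(r_0)\bigr)$ require $g_u$, and hence $k_u$, to be independent of $m$.

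Second, your regularity paragraph is unnecessary. Smoothness and positivity of $uN$ follow from those of the already-constructed $u$, together with $N>0$ on $[r_0,\infty)$, which holds because $m<r_0^2$.
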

\begin{proof}
Straightforward calculation shows that
\bee
k_m=\frac{\sqrt{r^2-m}}{r}
\eee
and
\bee
k_u=\frac{\sqrt{r^2+1}}{ur}.\eee
Recall that the function $u$ satisfies the following parabolic PDE \eqref{Para}
 \bee
-u_r+\frac{u^2u_{\varphi\varphi}}{r(1+r^2)}=\frac{(u^3-u)r}{1+r^2}.
\eee
Therefore,
\begin{equation*}
\begin{split}
&\ \ \ \frac{\d \Big(\int_{ \Sigma_r }  (k_m - k_u) N(r) \sqrt{1+r^2} \d s\Big)}{\d r}\\
&= \frac{\d \Big(\int_{0}^{2\pi}  (k_m - k_u) N(r) \sqrt{1+r^2}r\d \varphi \Big)}{\d r}\\
&= \int_{0}^{2\pi} \Big(  2r-\frac{r(2r^2+1-m)}{\sqrt{r^2-m}\sqrt{r^2+1}u} +\frac{\sqrt{r^2-m}}{\sqrt{r^2+1}}\frac{u_{\varphi\varphi}}{r} - \frac{\sqrt{r^2-m}}{\sqrt{r^2+1}u}(u^2-1)r        \Big)\d \varphi\\
&=-\int_0^{2\pi}\frac{r}{u \sqrt{r^2-m}\sqrt{r^2+1}}\Big(u \sqrt{r^2-m}-\sqrt{r^2+1}\Big)^2 \d \varphi\\
&\leq 0.
\end{split}
\end{equation*}
This completes the proof of Proposition \ref{P1}.
\end{proof}

\begin{prop}\label{P2} For any $r\in[r_0,+\infty)$, we have
\be\label{RS}
H^0(g_u) \leq m+1+ \frac{1}{\pi} \int_{\Sigma_{r}}\big(\frac{\sqrt{r^2-m}}{r}-k_u\big)\sqrt{1+r^2}N(r) \d s.
\ee
\end{prop}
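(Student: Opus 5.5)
Since the quantity \eqref{Q} of Proposition \ref{P1} is non-increasing on $[r_0,+\infty)$, it is enough to compute its limit as $r\to\infty$ and show that this limit equals $\pi\big(H^0(g_u)-m-1\big)$. Then for every $r\ge r_0$,
\[
\frac{1}{\pi}\int_{\Sigma_r}\Big(\frac{\sqrt{r^2-m}}{r}-k_u\Big)\sqrt{1+r^2}\,N(r)\,\d s\;\geq\;\lim_{\rho\to\infty}\frac{1}{\pi}\int_{\Sigma_\rho}(k_m-k_u)N\sqrt{1+\rho^2}\,\d s \;=\;H^0(g_u)-m-1,
\]
which rearranges to \eqref{RS}. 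Here we use $k_m=\sqrt{r^2-m}/r$ and $\d s=r\,\d\varphi$ on $\Sigma_r$, since the induced metric of $g_u$ on $\Sigma_r$ is $r^2\d\varphi^2$.

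To evaluate the limit, write the integrand in $\varphi$ as
\[
(k_m-k_u)N\sqrt{1+r^2}\,r=\frac{r^2-m}{\sqrt{1+r^2}}\cdot\sqrt{1+r^2}\,\Big(\frac{\sqrt{r^2-m}}{r}-\frac{\sqrt{1+r^2}}{ur}\Big)\frac{r}{\sqrt{r^2-m}}
\]
and simplify. Using $N=\sqrt{r^2-m}/\sqrt{1+r^2}$, it equals $(r^2-m)-\sqrt{r^2-m}\,\sqrt{1+r^2}\,u^{-1}$. We then expand each factor:
\[
\sqrt{(r^2-m)(r^2+1)}=r^2+\frac{1-m}{2}+\mathcal{O}(r^{-2}), \qquad u^{-1}=1-\frac{v}{r^2}+\mathcal{O}(r^{-4}),
\]
where $v=r^2(u-1)\to v_\infty$ by the Proposition giving \eqref{ULS}. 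The integrand therefore becomes
\[
r^2-m-r^2-\frac{1-m}{2}+v+o(1)=v_\infty-\frac{1+m}{2}+o(1).
\]
Integrating over $[0,2\pi]$ and dividing by $\pi$ gives
\[
\frac{1}{\pi}\int_0^{2\pi}v_\infty\,\d\varphi-(1+m)=H^0(g_u)-(m+1),
\]
by the mass formula $H^0=\frac{1}{\pi}\int v_\infty\,\d\varphi$ from the mass expression subsection.

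The main obstacle is justifying the passage to the limit inside the integral. This requires $v=r^2(u-1)$ to converge to $v_\infty$ uniformly in $\varphi$, or at least with a uniform bound allowing dominated convergence. I would take this from the two-sided barrier bounds \eqref{ULS}. Since $f_i-1=\mathcal{O}(r^{-2})$, they give $|v|\le C$ uniformly, and together with the convergence already asserted after \eqref{ULS}, dominated convergence applies. The rest is the elementary asymptotic bookkeeping above. We also need $r^2-m>0$ on $[r_0,\infty)$, which holds because $m<r_0^2$.
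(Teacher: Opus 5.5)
Your proposal is correct and follows the paper's own argument. You compute $\lim_{r\to\infty}$ of the monotone quantity \eqref{Q} using $u^{-1}=1-v/r^2+\mathcal{O}(r^{-4})$, identify this limit with $\pi\bigl(H^0(g_u)-m-1\bigr)$ via $H^0=\frac{1}{\pi}\int_0^{2\pi}v_\infty\,\d\varphi$, and conclude by Proposition \ref{P1}. Your expansion $\sqrt{(r^2-m)(r^2+1)}=r^2+\frac{1-m}{2}+\mathcal{O}(r^{-2})$ is cleaner than the paper's displayed expansion, which writes $N=1-\frac{1+m}{r^2}+\dots$ and so is missing a factor $\frac{1}{2}$, although the paper's stated limit is right. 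Your uniform bound on $v$ from \eqref{ULS} also makes explicit the interchange of limit and integral that the paper leaves implicit.
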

\begin{proof}
One has
\begin{equation*}
\begin{split}
&\ \ \ \frac{1}{\pi}\int_0^{2\pi} (k_m - k_u) N(r) \sqrt{1+r^2}r\d \varphi\\
&= \frac{1}{\pi}\int_0^{2\pi}\Big( \big(\frac{\sqrt{r^2+1}}{r}\sqrt{\frac{r^2-m}{r^2+1}}-\frac{\sqrt{r^2+1}}{ur}\big) r \sqrt{r^2+1} \sqrt{\frac{r^2-m}{r^2+1}} \Big) \d \varphi\\
&= \frac{1}{\pi}\int_0^{2\pi}\Big( \big(\frac{\sqrt{r^2+1}}{r}\big(1-\frac{1+m}{r^2}+\mathcal{O}(r^{-4})\big)-\frac{\sqrt{r^2+1}}{ur}\big) \cdot\\
&\hspace{18mm}r \sqrt{r^2+1} \big(1-\frac{1+m}{r^2}+\mathcal{O}(r^{-4})\big) \Big) \d \varphi.
\end{split}
\end{equation*}
Then
\bee
\lim_{r\rightarrow \infty }\frac{1}{\pi}\int_0^{2\pi} (k_m - k_u) N(r) \sqrt{1+r^2}r\d \varphi =H^0(g_u)-(1+m).\eee
By the monotonicity of \eqref{Q}, Proposition \ref{P2} is proved.
\end{proof}

Note that $k_u(r,\varphi)$ is a positive function and independent of the parameter $m$. Taking $r=r_0$ in \eqref{RS} yields
\begin{equation}\label{P3}
\begin{split}
H^0(g_u) & \leq m+1+2(r_0^2-m)-2r_0\bar k_u(r_0)\sqrt{r_0^2-m} \\
&= \left( \sqrt{r_0^2 - m} - r_0\bar k_u(r_0) \right)^2 + r_0^2 + 1 - r_0^2\bar k^2_u(r_0),
\end{split}
\end{equation}
where
\bee
\bar{k}_u(r_0) =\frac{1}{2\pi}\int_0^{2\pi}k_u(r_0,\varphi) \d \varphi.
\eee
It is easy to see that by taking $m=r_0^2\big(1-\bar k_u^2(r_0)\big)$ the right-hand side of \eqref{P3} equals $r_0^2 + 1 - r_0^2\bar k^2_u(r_0)$.

Therefore,
\bee
H^0(g_u)\leq  1+r_0^2.
\eee

Now we consider the case when $k\equiv 0$. In this case, the construction is trivial. One simply identifies the curve $\Sigma$ as the boundary $\Sigma_{r_0}=\{r=r_0\} $ of the BTZ manifold with the metric
\bee
g_{r_0^2}=\frac{\d r^2}{r^2 -r_0^2} + r^2 \d \varphi^2.\eee
It is clear that the Gauss curvature $K(g_{r_0^2})$ of $g_{r_0^2}$ equals $-1$ and $H^0(g_{r_0^2})=1+r_0^2$. At first glance, it seems that the metric component $g_{rr}$ becomes degenerate at $r=r_0$. However, if we make the change of coordinate $r=r_0\cosh\rho$, the metric $g_{r_0^2}$ can be rewritten as
\bee
g_{r_0^2}=\d \rho^2+ r_0^2\cosh^2\rho \d \varphi^2.\eee

Finally, we have
\bee
\mb(\Sigma, r_0, k) \leq 1+r_0^2.\eee
Either when $k$ is positive or when $k\equiv 0$, the constructed extensions are foliated by a $1$-parameter family of closed curves $\Sigma_r=\{r=\mbox{const.}\}$ with positive geodesic curvatures. This completes the proof of Theorem \ref{MTV}.

\section*{Acknowledgments} The authors are grateful for the reviewer's suggestions.
 X. He is partially supported by  the National Natural Science Foundation of China (12475049). X. Wu was partially supported by the National Natural Science Foundation of China (12275350). N. Xie was partially sponsored by the Natural Science Foundation of Shanghai (24ZR1406000).

\end{document}